\newtheorem{theorem}{Theorem}[section]
\newtheorem{lemma}[theorem]{Lemma}
\newtheorem{proposition}[theorem]{Proposition}
\newtheorem{corollary}[theorem]{Corollary}
\theoremstyle{definition}
\newtheorem{definition}[theorem]{Definition}
\newtheorem{example}[theorem]{Example}
\theoremstyle{remark}
\newtheorem{remark}[theorem]{Remark}
\numberwithin{equation}{section}      
\begin{document}






\date{Received: date / Accepted: date}

\title{On recurrent sets of operators}
\author{ Mohamed Amouch and Otmane Benchiheb}

\address{\textsc{Mohamed Amouch and Otmane Benchiheb},
University Chouaib Doukkali.
Department of Mathematics, Faculty of science
Eljadida, Morocco}
\email{amouch.m@ucd.ac.ma}
\email{otmane.benchiheb@gmail.com}
\subjclass[2010]{47A16.}
\keywords{Hypercyclicity; recurrent operators; $C$-regularized groups}

\begin{abstract}
An operator $T$ acting on a Banach space $X$ is said to be recurrent if for each $U$; a nonempty open subset
of $X$, there exists $n\in\mathbb{N}$ such that $T^n(U)\cap U\neq\emptyset.$ In the present work, we generalize this notion from a single operator to a set $\Gamma$ of operators. As application, we study the recurrence of $C$-regularized group of operators.
\end{abstract}


\maketitle

\section{Introduction and Preliminary}
Let $X$ be a complex Banach space and $\mathcal{B}(X)$ the algebra of all bounded linear operators on $X$. By an operator, we always mean a bounded linear operator.

The most studied notion in linear dynamics is that of hypercyclicity, supercyclicity and cyclicity: An
operator $T$ acting on a separable Banach space is said to be hypercyclic if there exists some vector $x\in X$
whose orbit under $T$;
$$ Orb(T,x):=\{T^nx \mbox{ : }n\geq0\}, $$
 is a dense subset of $X$. In this case, the vector $x$ is called a hypercyclic vector for $T$ and the set of all hypercyclic vectors of $T$ is denoted by $HC(T)$. 

The notion of supercyclic operators was introduced by Hilden and Wallen in \cite{HW}. An operator $T$ acting on $X$ is said to be supercyclic if the homogeneous orbit;
$$\mathbb{C}Orb(T,x):=\{\lambda T^nx \mbox{ : }\lambda\in\mathbb{C}\mbox{, } n\geq0\}, $$ 
is dense in $X$. In this case, the vector $x$ is called a supercyclic vector for $T$ and the set of all supercyclic vectors of $T$ is denoted by $SC(T)$.


For the more detailed information on hypercyclicity and supercyclicity,
the reader may refer to \cite{Bayart,Peris}.

  On the other hand, a very central notion
in topological dynamics is that of recurrence. This notion goes back to Poincar\'e and
Birkhoff and it refers to the existence of points in the space for which parts of their
orbits under a continuous map ''return'' to themselves. 

A vector $x \in X $ is called recurrent for an operator $T\in\mathcal{B}(X)$ or $T$-recurrent vector if there exists a strictly increasing sequence of positive integers $(k_n)_{n\in\mathbb{N}}$ such that
$$ T^{k_n}x\longrightarrow x$$
as $n\longrightarrow\infty$. The set of all recurrent vectors for $T$ is denoted by $Rec(T)$.
The operator $T$ itself is called recurrent if for each nonempty open set $U$ of $X$, there exists some $n\in\mathbb{N}$ such that 
$$ T^{-n}(U)\cap U\neq\emptyset. $$ 
%
%
%

For more information about recurrent vectors and recurrent operators, the reader may refer to \cite{CMP,CP,Furstenberg,KBA}.




Recently, the notion of hypercyclic operators, supercyclic operators and cyclic operators was generalized to subset $\Gamma$ of $\mathcal{B}(X)$. A set $\Gamma$ of operators is called hypercyclic if there exists a vector $x$ in $X$ such that its orbit under $\Gamma$; 
 $$Orb(\Gamma,x)=\{Tx \mbox{ : }T\in\Gamma\}$$
  is dense in $X$, see \cite{lindynsetope,AKH}. If there exists a vector $x$ such that the homogeneous orbit;
$$\mathbb{C}Orb(\Gamma,x)=\{\alpha Tx\mbox{ : }T\in\Gamma\mbox{, }\alpha\in\mathbb{C}\},$$
 is dense in $X$ for some vector $x$, then $\Gamma$ is called a supercyclic set of operators, see \cite{somversupsetope,AKH1}. If 
$$\mbox{span}\{Orb(\Gamma,x)\}=\mbox{span}\{Tx \mbox{ : }T\in\Gamma\}$$
 is dense in $X$ for some vector $x$, then $\Gamma$ is cyclic, see \cite{cycsetope,AKH1}. 
 In each case,
such a vector x is called a hypercyclic vector, a supercyclic vector, a cyclic vector for $\Gamma$,
respectively.

In this paper, we introduce and study concepts of recurrent vectors and recurrent sets in a complex Banach space $X$. 

 In section 2, we introduce and study the notion of recurrent vector for a set $\Gamma$ of operators. We give some examples and we prove that the set of all recurrent vectors for a set $\Gamma$ is a $G_\delta$ type.

 In section 3, we introduce the notion of recurrent sets of operators. We prove that a set is recurrent if and only if the set of all recurrent vectors is dense. Moreover, we give a counterexample show that may a set $\Gamma$ admits recurrent vectors without being recurrent.
 
 In section 4, we give applications for $C$- regularized of operators. We show that the recurrent  $C$- regularized of operators exists on each complex Banach space $X$ and we prove by giving examples that some proprieties known for hypercyclic strongly continuous semigroups and $C$- regularized group of operators does not holds in general in the case of and $C$- regularized group of operators.
\section{Recurrent Vectors of Sets of Operators}
\begin{definition}\label{def1}
A vector $x\in X\setminus\{0\}$ is said to be recurrent for $\Gamma$ or $\Gamma$-recurrent if there exist a sequence $\{k\}$ of positive integers and a sequence $\{T_{k}\} \subset\Gamma$ such that
$$T_{k}x\longrightarrow x $$
as $k\longrightarrow +\infty$. We denote by $Rec(\Gamma)$ the set of all recurrent vectors for $\Gamma.$
\end{definition}
\begin{remark}
Let $X$ be a complex Banach space and $T\in\mathcal{B}(X)$. A vector $x\in X$ is recurrent for $T$ if and only if $x$ is a recurrent vector for the set
$$\Gamma:=\{T^n x\mbox{ : }n\geq0\}.$$
In this case, we write $Rec(T)$ instead of $Rec(\Gamma)$, see \cite{CMP}.
\end{remark}
Let $X$ be a complex Banach space and $\Gamma$ a subset of $\mathcal{B}(X)$. It is clear that if $x\in X$ is a hypercyclic vector for $\Gamma$, then it is a recurrent vector for $\Gamma$. The converse does not holds in general as shows the next example.
\begin{example}\label{ex2}
Let $X$ be a complex Banach space and $(a_n)_{n\geq0}$ a sequence of complex numbers such that $a_n\longrightarrow 1$
as $n\longrightarrow +\infty$.
 For all $n\in\mathbb{N}$, let $T_n$ be an operator defined by$:$
$$\begin{array}{ccccc}
T_n & : & X & \longrightarrow & X \\
 & & x & \longmapsto & a_n x. \\
\end{array}$$
Let $\Gamma=\{T_n\mbox{ : }n\in\mathbb{N}\}\subset\mathcal{B}(X)$. For all $x\in X\setminus\{0\}$, we have
$$ T_nx=a_nx \longrightarrow x$$
as $n\longrightarrow +\infty$. This means that $x$ is a recurrent vector for $\Gamma$ for all $x\in X\setminus\{0\}$. However, $\Gamma$ admits no hypercyclic vectors.
\end{example}
\begin{remark}
Let $X$ be a complex Banach space. We can deduce, from Example \ref{ex2}, that in each complex Banach space, with finite or infinite dimension, there exists a subset $\Gamma$ of $\mathcal{B}(X)$ which admits recurrent vectors.
\end{remark}
Let $X$ be a complex Banach space and  $\Gamma$ a subset of $ \mathcal{B}(X).$ We denote by $\{\Gamma\}^{'}$ the set of all elements of $\mathcal{B}(X)$ which commute with every element of $\Gamma.$ That is
$$ \{\Gamma\}^{'}:=\{S\in\mathcal{B}(X) \mbox{ : }TS=ST \mbox{ for all }T\in\Gamma\}. $$
\begin{proposition}\label{p1}
Let $x\in Rec(\Gamma)$ and $S\in\mathcal{B}(X).$ If $S\in\{\Gamma\}^{'}$, then $Sx\in Rec(\Gamma)$.
\end{proposition}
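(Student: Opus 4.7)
The plan is to show that the same sequence $\{T_k\} \subset \Gamma$ witnessing the recurrence of $x$ also witnesses the recurrence of $Sx$. By hypothesis $x \in Rec(\Gamma)$, so by Definition~\ref{def1} there exist $\{T_k\} \subset \Gamma$ with $T_k x \longrightarrow x$. I would then write, for each $k$,
$$ T_k(Sx) = (T_k S)x = (S T_k)x = S(T_k x), $$
where the middle equality is the assumption $S \in \{\Gamma\}'$. Since $S$ is a bounded (hence continuous) linear operator and $T_k x \longrightarrow x$, applying $S$ yields $S(T_k x) \longrightarrow Sx$, and combining gives $T_k(Sx) \longrightarrow Sx$. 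By Definition~\ref{def1}, this shows $Sx \in Rec(\Gamma)$.

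The only step requiring any care is a technical wrinkle coming from the fact that Definition~\ref{def1} excludes the zero vector from $Rec(\Gamma)$. I would therefore either add the tacit assumption $Sx \neq 0$ (so that the conclusion makes sense in the stated definition), or note that the case $Sx = 0$ is trivial and can simply be excluded, since the recurrence condition $T_k(Sx) \to Sx$ is automatically satisfied but excluded by convention. In practice this minor point does not affect the proof strategy at all.

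There is no serious obstacle: the heart of the argument is the one-line computation $T_k(Sx) = S(T_k x)$ together with continuity of $S$. Essentially, commutation transports the entire recurrence witness from $x$ to $Sx$.
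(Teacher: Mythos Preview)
Your proof is correct and follows essentially the same approach as the paper: use the recurrence witness $\{T_k\}$ for $x$, apply the commutation $T_k S = S T_k$ together with the continuity of $S$ to obtain $T_k(Sx)\to Sx$. Your remark about the $Sx=0$ case is a fair technical observation that the paper itself simply ignores.
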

\begin{proof}
Since $x\in Rec(\Gamma)$, there exist a sequence $\{k\}$ of positive integers and a sequence $\{T_{k}\} \subset\Gamma$ such that
$$T_{k}x\longrightarrow x $$
as $k\longrightarrow +\infty$. Since $S$ is continuous and $S\in \{\Gamma\}^{'}$, it follows that
$$T_{k}(Sx)\longrightarrow Sx $$
as $k\longrightarrow +\infty$. Hence $Sx\in Rec(\Gamma).$
\end{proof}

\begin{corollary}
If $x\in Rec(\Gamma)$, then $\alpha x\in Rec(\Gamma)$, for all $\alpha \in \mathbb{C}\setminus\{0\}.$
\end{corollary}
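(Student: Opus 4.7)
The plan is to derive this corollary as a direct consequence of Proposition \ref{p1} by choosing an appropriate operator $S$. Specifically, for a fixed $\alpha \in \mathbb{C} \setminus \{0\}$, I would set $S := \alpha I$, where $I$ denotes the identity operator on $X$. This operator $S$ is clearly bounded and linear, hence $S \in \mathcal{B}(X)$.

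The key observation is that $S = \alpha I$ is a scalar multiple of the identity, so it commutes with every operator on $X$; in particular, $TS = \alpha T = ST$ for every $T \in \Gamma$, which means $S \in \{\Gamma\}'$. Applying Proposition \ref{p1} with this choice of $S$, I would conclude that $Sx = \alpha x$ lies in $Rec(\Gamma)$.

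One point worth checking to make the argument rigorous is that $\alpha x \neq 0$, so that it qualifies as a recurrent vector under Definition \ref{def1}, which requires $\Gamma$-recurrent vectors to be nonzero. This is immediate: since $x \in Rec(\Gamma)$ we have $x \neq 0$, and by hypothesis $\alpha \neq 0$, so $\alpha x \neq 0$. There is essentially no obstacle here; the whole content of the corollary is the choice $S = \alpha I$, and the proof reduces to a one-line invocation of Proposition \ref{p1}.
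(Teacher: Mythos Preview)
Your proposal is correct and follows essentially the same approach as the paper: set $S=\alpha I$, observe that $S\in\{\Gamma\}'$, and invoke Proposition~\ref{p1}. Your additional remark that $\alpha x\neq 0$ (so that Definition~\ref{def1} is satisfied) is a nice point of care that the paper leaves implicit.
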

\begin{proof}
let $\alpha \in \mathbb{C}\setminus\{0\}$ and $x\in Rec(\Gamma)$, then $T=\alpha I$ is an operator such that $T\in\{\Gamma\}^{'}$. Hence, by Proposition \ref{p1}, $\alpha x\in Rec(\Gamma)$.
\end{proof}

Let $X$ and $Y$ be complex Banach spaces. A set $\Gamma\subset\mathcal{B}(X)$ is said to be quasi-similar to a set $\Gamma_1\subset\mathcal{B}(Y)$ if there exists a continuous map $\phi$ : $X\longrightarrow Y$ of dense range and satisfies for all $T\in\Gamma,$ there exists some $S\in\Gamma_1$ such that $S\circ\phi=\phi\circ T$. If $\phi$ is a
homeomorphism, then $\Gamma$ and $\Gamma_1$ are called similar.

The notion of recurrent vectors of a single operator is preserved under quasi-similarity, see \cite{CMP}.
In the following, we prove that the same result holds in the case of sets of operators.
\begin{proposition}\label{p3}
Assume that $\Gamma$ and $\Gamma_1$ are quasi-similar. If $x$ is a recurrent vector for $\Gamma$, then $\phi x$ is a recurrent vector for $\Gamma_1$. That is
$$\phi(Rec(\Gamma)\subset Rec(\Gamma_1).$$
\end{proposition}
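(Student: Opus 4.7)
The plan is to unwind the definitions and transport the recurrence relation through $\phi$ using the intertwining property. Since $x\in Rec(\Gamma)$, by Definition~\ref{def1} there exist a sequence $\{k\}$ of positive integers and operators $T_k\in\Gamma$ such that $T_k x \to x$ as $k\to+\infty$. I would then invoke quasi-similarity to select, for each $k$, an operator $S_k\in\Gamma_1$ satisfying $S_k\circ\phi=\phi\circ T_k$.

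The next step is to push the convergence through $\phi$. Because $\phi$ is continuous, applying $\phi$ to $T_k x\to x$ yields $\phi(T_k x)\to\phi(x)$. Using the intertwining identity $S_k\phi=\phi T_k$, I rewrite the left-hand side as $S_k(\phi x)$, so that
$$S_k(\phi x)\longrightarrow \phi(x)\qquad\text{as }k\to+\infty,$$
with $S_k\in\Gamma_1$. By Definition~\ref{def1} this exhibits $\phi(x)$ as a recurrent vector for $\Gamma_1$, giving $\phi(Rec(\Gamma))\subset Rec(\Gamma_1)$ as claimed.

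There is really no serious obstacle in this argument; it is a direct transport of the single-operator proof in \cite{CMP} to the setting of sets of operators. The only subtle point worth flagging is that Definition~\ref{def1} excludes the zero vector from $Rec(\Gamma_1)$, so strictly speaking one should note that $\phi(x)\ne 0$ in order to place $\phi(x)\in Rec(\Gamma_1)$; since the denseness of the range of $\phi$ is not enough to guarantee injectivity, this is a genuine hypothesis that is implicit in the statement. I would not belabor this point in the write-up, but simply present the three-line chain of implications above, since that is all the proof requires.
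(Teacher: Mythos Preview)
Your proof is correct and follows essentially the same route as the paper: pick $T_k\in\Gamma$ with $T_kx\to x$, use continuity of $\phi$ to get $\phi T_kx\to\phi x$, and then the intertwining relation $S_k\phi=\phi T_k$ to conclude $S_k(\phi x)\to\phi x$. Your observations that the dense-range hypothesis on $\phi$ is not actually used (the paper records this in the remark immediately following the proposition) and that $\phi(x)\neq 0$ is tacitly assumed are both apt.
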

\begin{proof}
Since $\Gamma$ and $\Gamma_1$ are quasi-similar, there exists a continuous map $\phi$ : $X\longrightarrow Y$ with dense range such that for all $T\in\Gamma,$ there exists $S\in\Gamma_1$ satisfying $S\circ\phi=\phi\circ T$.
Assume that $\Gamma$ is recurrent in $X$, then there exist $x\in X\setminus\{0\}$, a sequence $\{k\}$ of positive integers and a sequence $\{T_{k}\} \subset\Gamma$ such that
$T_{k}x\longrightarrow x $
as $k\longrightarrow +\infty$. Hence, $\phi \circ T_{k}(x)\longrightarrow\phi x $ as $k\longrightarrow +\infty$. Foll all $k$, let $S_k\in\Gamma_1$ such that $S_k\circ\phi=T_k \circ\phi$. Hence, 
$$S_k(\phi x)\longrightarrow\phi x $$
 as $k\longrightarrow +\infty$, which implies that $\Gamma_1$ is recurrent in $Y$ and $\phi x\in Rec(\Gamma_1).$
\end{proof}
\begin{remark}
In Proposition \ref{p3}, the condition of density of the range of $\phi$ is not necessary. Indeed, we need only that $\phi$ be continuous to prove this proposition.
\end{remark}
\begin{corollary}
Let $X$ and $Y$ be complex Banach spaces, $\Gamma$ a subset of $\mathcal{B}(X)$ and $\Gamma_1$ a subset $\mathcal{B}(Y)$.
Assume that $\Gamma$ and $\Gamma_1$ are similar. A vector $x\in X$ is a recurrent vector for $\Gamma$ if and only if $\phi x$ is a recurrent vector for $\Gamma_1$. That is
$$\phi(Rec(\Gamma)=Rec(\Gamma_1).$$
\end{corollary}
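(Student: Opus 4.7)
The inclusion $\phi(Rec(\Gamma)) \subset Rec(\Gamma_1)$ is immediate: since $\phi$ being a homeomorphism implies it is in particular continuous with dense range, similarity implies quasi-similarity, and Proposition \ref{p3} gives the inclusion at once.

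For the reverse inclusion $Rec(\Gamma_1) \subset \phi(Rec(\Gamma))$, my plan is to apply Proposition \ref{p3} again, but with the roles of $\Gamma$ and $\Gamma_1$ exchanged and with $\phi$ replaced by $\phi^{-1}$. First I would check that $\Gamma_1$ is quasi-similar to $\Gamma$ through $\phi^{-1}$. The map $\phi^{-1}\colon Y\to X$ is continuous (because $\phi$ is a homeomorphism) and surjective, hence of dense range; and the intertwining relation $S\circ\phi=\phi\circ T$ from the similarity hypothesis can be left-multiplied and right-multiplied by $\phi^{-1}$ to yield $T\circ \phi^{-1}=\phi^{-1}\circ S$, which is exactly the quasi-similarity relation in the opposite direction. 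Then given $y\in Rec(\Gamma_1)$, Proposition \ref{p3} applied in the reverse direction gives $\phi^{-1}(y)\in Rec(\Gamma)$, whence $y=\phi(\phi^{-1}(y))\in \phi(Rec(\Gamma))$.

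The only nontrivial bookkeeping point, and the place where one must be careful, is the correspondence of elements in the two sets. The definition of similarity stated before Proposition \ref{p3} is one-directional: for each $T\in\Gamma$ there is some $S\in\Gamma_1$ with $S\phi=\phi T$. To invoke Proposition \ref{p3} from $\Gamma_1$ to $\Gamma$, I need the symmetric statement: for each $S\in\Gamma_1$ there is some $T\in\Gamma$ with $T\phi^{-1}=\phi^{-1}S$. This is the natural reading of "$\Gamma$ and $\Gamma_1$ are similar" (the relation is symmetric); if one prefers to work strictly from the given definition, it suffices to replace $\Gamma_1$ by $\phi\Gamma\phi^{-1}$, which produces exactly the same set under the stated hypothesis and makes the correspondence $T\longleftrightarrow \phi T\phi^{-1}$ a manifest bijection. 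Either way, the proof reduces to two symmetric applications of Proposition \ref{p3}.
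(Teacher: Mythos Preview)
The paper states this corollary without proof, and your argument---two symmetric applications of Proposition~\ref{p3}, once via $\phi$ and once via $\phi^{-1}$---is precisely the intended one. Your observation that the paper's definition of similarity is literally one-directional (and hence that one must read ``similar'' as a symmetric relation, or equivalently take $\Gamma_1=\phi\Gamma\phi^{-1}$) is a valid and careful point that the paper glosses over.
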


\begin{proposition}\label{p4}
Let $\{X_i\}_{i=1}^{n}$ be a family of complex Banach spaces and $\Gamma_i$ a subset of
$\mathcal{B}(X_i),$ for all $1\leq i\leq n$. If  $(x_1,x_2,\dots,x_n)\in Rec(\oplus_{i=1}^n\Gamma_i)$, then $x_i\in Rec(\Gamma_i)$, for all $1\leq i\leq n$. That is
$$ Rec(\oplus_{i=1}^n\Gamma_i)\subset \oplus_{i=1}^nRec(\Gamma_i). $$
\end{proposition}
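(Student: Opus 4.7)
The plan is to unpack the definition of recurrence for the direct sum set and exploit the fact that convergence in a finite direct sum of Banach spaces is equivalent to componentwise convergence. The argument is essentially mechanical, so I would keep it short.

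First, I would apply Definition \ref{def1} to the assumed recurrent vector $(x_1,x_2,\dots,x_n)$ of $\oplus_{i=1}^n\Gamma_i$ to obtain a sequence $\{k\}$ of positive integers together with a sequence of operators $\oplus_{i=1}^n T_i^{(k)}\in\oplus_{i=1}^n\Gamma_i$ (so that $T_i^{(k)}\in\Gamma_i$ for each $i$ and each $k$) satisfying
$$ \oplus_{i=1}^n T_i^{(k)}\bigl(\oplus_{i=1}^n x_i\bigr)\longrightarrow \oplus_{i=1}^n x_i $$
in $\oplus_{i=1}^n X_i$ as $k\to+\infty$.

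Second, using the definition of the direct sum operator recalled just before the proposition, the left-hand side equals $\oplus_{i=1}^n T_i^{(k)} x_i$. Since the norm of $\oplus_{i=1}^n X_i$ dominates each coordinate projection, convergence of the tuple forces $T_i^{(k)} x_i\longrightarrow x_i$ in $X_i$ for every $1\leq i\leq n$. The sequence $\{T_i^{(k)}\}_k\subset\Gamma_i$ then witnesses $x_i\in Rec(\Gamma_i)$ by Definition \ref{def1}, which is exactly the desired conclusion, and the asserted inclusion $Rec(\oplus_{i=1}^n\Gamma_i)\subset\oplus_{i=1}^n Rec(\Gamma_i)$ follows.

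I do not foresee any real obstacle; the whole proof is a one-line consequence of componentwise convergence. The only delicate point worth a parenthetical remark is the nonzero requirement built into Definition \ref{def1}: the hypothesis only guarantees that the tuple $(x_1,\dots,x_n)$ is nonzero, so it is a priori possible that some individual $x_i$ vanishes. One should either read the conclusion as holding only for those indices $i$ with $x_i\neq 0$, or observe that any zero coordinate is trivially fixed by every $T_i^{(k)}$ and so contributes no obstruction to the convergence argument above.
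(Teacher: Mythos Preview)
Your proof is correct and follows essentially the same route as the paper: apply the definition of recurrence to obtain a sequence of direct-sum operators converging on the tuple, then use componentwise convergence to conclude $T_i^{(k)}x_i\to x_i$ for each $i$. Your parenthetical remark about the nonzero requirement in Definition \ref{def1} is a genuine subtlety that the paper's own proof glosses over.
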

\begin{proof}
Let $(x_1,x_2,\dots,x_n)\in Rec(\oplus_{i=1}^n\Gamma_i)$, then there exist a sequence $\{k\}$ of positive integers and a sequence $\{(T^1_{k},T^2_{k},\dots,T^n_{k})\} \subset \oplus_{i=1}^n\Gamma_i$ such that
$$(T^1_{k}x_1,T^2_{k}x_2,\dots,T^n_{k}x_n)=(T^1_{k},T^2_{k},\dots,T^n_{k})(x_1,x_2,\dots,x_n)\longrightarrow (x_1,x_2,\dots,x_n) $$
as $k\longrightarrow +\infty$. Hence, for all $1\leq i\leq n,$ we have $T^i_{k}x_i\longrightarrow x_i$ as $k\longrightarrow +\infty$. Thus, for all $1\leq i\leq n,$ $\Gamma_i$ is recurrent in $X_i$ and $x_i\in Rec(\Gamma_i).$
\end{proof}

A characterization of the set of recurrent vectors of set of operators on a complex Banach space is due the next proposition.

\begin{proposition}\label{p2}
 The set of all recurrent vectors for a set $\Gamma$ is even empty or a $G_\delta$ type. In the last case we have
$$Rec(\Gamma)=\bigcap_{n\geq1}\bigcup_{T\in \Gamma}\left\lbrace x\in X \mbox{ : }\Vert Tx -x \Vert<\frac{1}{n} \right\rbrace .$$
\end{proposition}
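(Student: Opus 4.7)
My plan is to prove the displayed equality as a set identity and then read off the $G_\delta$ conclusion from the right-hand side. Write $A := \bigcap_{n \geq 1} \bigcup_{T \in \Gamma} \{x \in X : \|Tx - x\| < 1/n\}$.

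For the inclusion $Rec(\Gamma) \subseteq A$, I would take $x \in Rec(\Gamma)$ and use Definition \ref{def1} to obtain a sequence $\{T_k\} \subset \Gamma$ with $T_k x \to x$. Then, for each fixed $n \geq 1$, convergence yields some $k$ with $\|T_k x - x\| < 1/n$, placing $x$ in the inner union; since $n$ was arbitrary, $x \in A$. For the reverse inclusion $A \subseteq Rec(\Gamma)$, if $x \in A$, then for every $n$ I can select $T_n \in \Gamma$ with $\|T_n x - x\| < 1/n$, and the resulting sequence $\{T_n\}$ witnesses $T_n x \to x$, so $x \in Rec(\Gamma)$. (The only delicate point is that Definition \ref{def1} excludes the zero vector, while $0 \in A$ trivially; this is harmless since $\{0\}$ is closed, so $A \setminus \{0\}$ remains $G_\delta$, and one simply notes that the identity is understood modulo the zero vector.)

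For the topological conclusion, I would argue that for each $T \in \mathcal{B}(X)$ the map $x \mapsto \|Tx - x\|$ is continuous, so $\{x \in X : \|Tx - x\| < 1/n\}$ is open as the preimage of $(-\infty, 1/n)$. Consequently $\bigcup_{T \in \Gamma}\{x : \|Tx - x\| < 1/n\}$ is open for each $n$ (a union of open sets, with no countability assumption on $\Gamma$ required), and intersecting over the countable index set $n \geq 1$ yields a $G_\delta$.

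The proof is essentially routine; the main (very minor) obstacle is the discrepancy between the definition's exclusion of $0$ and its automatic presence in $A$, which I would dispatch in one sentence as above. No deeper structural property of $\Gamma$ is needed.
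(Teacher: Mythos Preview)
Your argument is correct and follows essentially the same route as the paper: both directions of the set identity are proved exactly as you outline, and the $G_\delta$ conclusion is drawn from the openness of each $\{x:\|Tx-x\|<1/n\}$. Your remark on the zero vector is an extra observation the paper does not make; otherwise the proofs coincide.
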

\begin{proof}
Let $x\in Rec(\Gamma)$. Then there exist a sequence $\{k\}$ of positive integers and a sequence $\{T_{k}\} \subset\Gamma$ such that
$T_{k}x\longrightarrow x $
as $k\longrightarrow +\infty$. Hence, for all $n\geq1$, there exists $k$ such that $\Vert T_kx-x\Vert<\frac{1}{n}$, this implies that $\displaystyle x\in \bigcap_{n\geq1}\bigcup_{T\in \Gamma}\left\lbrace x\in X \mbox{ : }\Vert Tx -x \Vert<\frac{1}{n} \right\rbrace$. For the converse, let  $\displaystyle x\in \bigcap_{n\geq1}\bigcup_{T\in \Gamma}\left\lbrace x\in X \mbox{ : }\Vert Tx -x \Vert<\frac{1}{n} \right\rbrace$, then for all $n\geq1 $ there exists $T_n\in\Gamma$ such that $\Vert Tx -x \Vert<\frac{1}{n}$, this means that $T_{n}x\longrightarrow x $ as $n\longrightarrow +\infty$. Hence, $x\in Rec(\Gamma).$ Since for all $n\geq1$ the set $\left\lbrace x\in X \mbox{ : }\Vert Tx -x \Vert<\frac{1}{n} \right\rbrace$ is open, it follows that $Rec(\Gamma)$ is a $G_\delta$ type.
\end{proof}
\begin{theorem}\label{t2} 
Assume that for all $T$, $S\in\Gamma$ we have $TS\in\Gamma$ and $(\lambda_T)_{T\in\Gamma}$ be a (multiplicative) semi-group inside $\mathbb{T}$. Then $x\in Rec(\Gamma)$ if and only if $x\in Rec(\Gamma_1)$, where 
$$\Gamma_1:=\{\lambda_T T \mbox{ : }T\in\Gamma\}$$
 and $\mathbb{T}=\{\alpha\in\mathbb{C} \mbox{ : }\vert\alpha\vert=1\}$ is the unite circle.
\end{theorem}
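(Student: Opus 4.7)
The plan is to exploit the semigroup closure on both sides to convert a recurrence witness $T_k x\to x$ in $\Gamma$ into one of the form $\lambda_{S_k}S_k x\to x$ in $\Gamma_1$, and vice versa. The crux is to show that, for any prescribed $\delta>0$, there exists a single element $S\in\Gamma$ satisfying simultaneously $\|Sx-x\|<\delta$ and $|\lambda_S-1|<\delta$; once this is available, the triangle inequality yields $\|\lambda_S S x - x\|\le \delta(1+\|x\|)$, so $x\in Rec(\Gamma_1)$.

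For the direction $(\Rightarrow)$, I would start from $(T_k)\subset\Gamma$ with $T_k x\to x$ and, using compactness of $\mathbb{T}$, pass to a subsequence so that $\lambda_{T_k}\to\mu\in\mathbb{T}$. The powers $\{\mu^N:N\ge 1\}$ accumulate at $1$ (trivially if $\mu$ is a root of unity; by density of the subgroup generated by $\mu$ otherwise), so for any $\delta>0$ I select $N$ with $|\mu^N-1|<\delta$. I then choose indices $n_1<n_2<\cdots<n_N$ inductively: having fixed $n_1,\ldots,n_{k-1}$, set $M_{k-1}:=\|T_{n_1}\cdots T_{n_{k-1}}\|$ (with $M_0=1$) and select $n_k$ large enough that both $|\lambda_{T_{n_k}}-\mu|<\delta/N$ and $\|T_{n_k}x-x\|<\delta/(NM_{k-1})$. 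Putting $S:=T_{n_1}\cdots T_{n_N}\in\Gamma$, which has $\lambda_S=\prod_k \lambda_{T_{n_k}}$ by the semigroup hypothesis, the telescoping identity
\[
Sx-x=\sum_{k=1}^{N} T_{n_1}\cdots T_{n_{k-1}}\bigl(T_{n_k}x-x\bigr)
\]
yields $\|Sx-x\|<\delta$, while the analogous telescoping inequality on the unit circle gives $|\lambda_S-\mu^N|\le \sum_k|\lambda_{T_{n_k}}-\mu|<\delta$, hence $|\lambda_S-1|<2\delta$. Letting $\delta\to 0$ extracts a sequence realising $x\in Rec(\Gamma_1)$.

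For the direction $(\Leftarrow)$ I would argue by symmetry. The set $\Gamma_1$ is itself closed under composition since $(\lambda_S S)(\lambda_T T)=\lambda_{ST}\,ST\in\Gamma_1$, and the assignment $\lambda_T T\mapsto \overline{\lambda_T}$ defines a multiplicative semigroup inside $\mathbb{T}$; the corresponding modified family is $\{\overline{\lambda_T}\cdot\lambda_T T: T\in\Gamma\}=\Gamma$, so applying the forward direction just proved (with $\Gamma_1$ playing the role of $\Gamma$ and conjugate unimodular weights) gives $Rec(\Gamma_1)\subset Rec(\Gamma)$. The main obstacle I anticipate is the telescoping bound: the partial-product norms $M_{k-1}$ may grow arbitrarily fast in $k$, so the inductive construction must fix each $n_k$ only \emph{after} the previous ones, ensuring $\|T_{n_k}x-x\|$ can be driven below the already-determined threshold $\delta/(NM_{k-1})$. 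Since $T_n x\to x$ without any quantitative rate, this is always achievable, but the careful ordering of quantifiers is the one place that must be handled with care.
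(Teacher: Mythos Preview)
Your proof is correct and follows essentially the same strategy as the paper: the paper encodes your construction by defining $F=\{\mu\in\mathbb{T}:(\lambda_kT_k)x\to\mu x\ \text{for some sequence}\}$, shows $F$ is a nonempty closed multiplicative sub-semigroup of $\mathbb{T}$ (its two-term estimate $\|(\lambda_{n_1}\lambda_{n_2}T_{n_1}T_{n_2})x-\mu_1\mu_2 x\|<\varepsilon$ is exactly your telescoping for $N=2$), and then uses the rational/irrational rotation dichotomy to conclude $1\in F$. Your explicit $N$-fold product with carefully ordered quantifiers is the constructive unwinding of that semigroup argument, and the reverse inclusion by the conjugate-weight symmetry is what the paper tacitly invokes when it says only $Rec(\Gamma)\subset Rec(\Gamma_1)$ needs proof.
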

\begin{proof}
To prove the proposition we only need to prove that $ Rec(\Gamma)\subset Rec(\Gamma_1)$. Let $x$ be a recurrent vector for $\Gamma)$. We define a subset $F$ of $\mathbb{T}$ by
$$ F:=\{\mu\in\mathbb{T}\mbox{ : }(\lambda_k T_k)x\longrightarrow\mu x \mbox{ for some sequence }(k)\subset\mathbb{N} \mbox{ with }k\longrightarrow\infty\}. $$

To show that $x\in Rec(\Gamma)$ we need to prove that $1\in F$. For that, we begin by proving that $F\neq\emptyset$. 

Since $x$ is a recurrent vector for $\Gamma$, there exists a 
sequence of positive integers $(k)\subset\mathbb{N}$ such that 
$$T_kx\longrightarrow x.$$
Without loss of generality we my suppose that $\lambda_k \longrightarrow\rho$ for
some $\rho\in\mathbb{T}$. We conclude that $(\lambda_k T_k)x\longrightarrow\rho x$. This means that $\rho \in F$.

Now, let
$\mu_1$, $\mu_2 \in F$ and let $\varepsilon > 0$ fixed. Since $\mu_1 \in F$ there exists $n_1\in\mathbb{N}$ such that
$$\Vert \lambda_{n_1}T_{n_1}x-\mu_1 x\Vert<\frac{\varepsilon}{2}.$$
Since $\mu_2 \in F$ there exists $n_2\in\mathbb{N}$ such that
$$\Vert \lambda_{n_2}T_{n_2}x-\mu_2 x\Vert<\frac{\varepsilon}{2\Vert\lambda_{n_1}T_{n_1}\Vert}.$$

We thus get
\begin{align*}
\Vert (\lambda_{n_1}\lambda_{n_2}T_{n_1}T_{n_2})x-\mu_1\mu_2x\Vert  &\leq \Vert (\lambda_{n_1} T_{n_1})(\lambda_{n_2} T_{n_2}x-\mu_2x)\Vert+\Vert \mu_2(\lambda_{n_1} T_{n_1}x-\mu_1x)\\
                                                                    &\leq \Vert \lambda_{n_1} T_{n_1}\Vert \Vert \lambda_{n_2} T_{n_2}x-\mu_2 x\Vert+\frac{\varepsilon}{2}<\varepsilon.
\end{align*}
So $\mu_1 \mu_2\in F.$ Hence $F$ is a (multiplicative) semi-group inside $\mathbb{T}.$

Let $\rho \in F$, then  for all $n\in\mathbb{N}$, we have $\rho^n \in F$. We have two cases

\begin{itemize}
\item If $\rho$ is a rational rotation this means that $1 \in F$ and we are done.
\item If not, there exists
sequence of positive integers $\tau_k$ such that $\rho^{\tau_k} \longrightarrow 1$. 
Since $F$ is closed, it follows that that $1 \in F$.
\end{itemize}
\end{proof}
\section{Recurrent Sets of Operators}

In the following definition, we introduce the notion of recurrence of sets of operators which generalizes the notion of recurrence of a single operator.  
\begin{definition}
A set $\Gamma\subset$ is called recurrent if for each
nonempty open subset $U$ of $X$ there exists some operator $T\in\Gamma$ such that
$$T(U)\cap U\neq\emptyset.$$
\end{definition} 
\begin{remark}
Let $X$ be a complex Banach space.
An operator $T\in\mathcal{B}(X)$ is recurrent as an operator if and only if the set
$$\Gamma:=\{T^n \mbox{ : }n\geq0\}$$
 is recurrent as a set of operators.
\end{remark}

The recurrence of a single operators is preserved under quasi-similarity, see \cite{CMP}. The following proposition proves that the same result holds in the case of sets of operators.

\begin{proposition}\label{prop1}
Assume that $\Gamma\subset\mathcal{B}(X)$ and $\Gamma_1\subset\mathcal{B}(Y)$ are quasi-similar.
 If $\Gamma$ is a recurrent set in $X$, then $\Gamma_1$ is a recurrent set in $Y$.
\end{proposition}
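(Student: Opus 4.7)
The plan is to mimic the standard pullback argument used for quasi-similarity of single operators and translate it to the setting of sets. Given a nonempty open subset $V$ of $Y$, I will show that $U := \phi^{-1}(V)$ is a nonempty open subset of $X$, then apply the recurrence of $\Gamma$ to $U$, and finally push forward through the intertwining relation $S \circ \phi = \phi \circ T$ to produce the required operator $S \in \Gamma_1$.

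First, I would note that $U = \phi^{-1}(V)$ is open because $\phi$ is continuous, and nonempty because $\phi$ has dense range in $Y$: since $V$ is nonempty and open, $\phi(X) \cap V \neq \emptyset$, so there is some $x_0 \in X$ with $\phi(x_0) \in V$, i.e.\ $x_0 \in U$. Next, since $\Gamma$ is recurrent in $X$, there exists $T \in \Gamma$ such that $T(U) \cap U \neq \emptyset$; pick $x \in U$ with $Tx \in U$.

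By the quasi-similarity hypothesis, there is some $S \in \Gamma_1$ satisfying $S \circ \phi = \phi \circ T$. Then $\phi(x) \in V$ (because $x \in U$), and
\[
S(\phi(x)) = \phi(Tx) \in \phi(U) \subset V,
\]
again because $Tx \in U = \phi^{-1}(V)$. Consequently $S(\phi(x))$ belongs both to $V$ and to $S(V)$, so $S(V) \cap V \neq \emptyset$. Since $V$ was an arbitrary nonempty open subset of $Y$, this proves $\Gamma_1$ is recurrent.

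There is no serious obstacle here; the only point that requires care is the non-emptiness of $U$, which is precisely where density of the range of $\phi$ enters. Note that, unlike in Proposition \ref{p3} (where the intertwining alone sufficed to transport recurrent vectors), here the density of $\phi(X)$ is essential because we must exhibit a genuine nonempty open set in $X$ on which to apply the hypothesis on $\Gamma$.
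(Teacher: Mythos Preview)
Your proof is correct and follows essentially the same route as the paper's: pull back the open set via $\phi$, use continuity and dense range to get a nonempty open preimage, apply recurrence of $\Gamma$ there, and push forward through the intertwining relation to obtain $S\in\Gamma_1$ with $S(V)\cap V\neq\emptyset$. Your added remark that density of $\phi(X)$ is genuinely needed here (in contrast to Proposition~\ref{p3}) is a nice observation.
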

\begin{proof}
Since  $\Gamma$ and $\Gamma_1$ are quasi-similar, there exists a continuous map $\phi$ : $X\longrightarrow Y$ with dense range such that for all $T\in\Gamma,$ there exists $S\in\Gamma_1$ satisfying $S\circ\phi=\phi\circ T$.
Let $U$ be a nonempty open subset of $Y$. Since $\phi$ is of dense range, $\phi^{-1}(U)$ is nonempty and open. If $\Gamma$ is recurrent in $X$, then there exist $y\in \phi^{-1}(U)$ and $T\in\Gamma$ such that $Ty\in\phi^{-1}(U)$, which implies that $\phi(y)\in U$ and $\phi(Ty)\in U$. Let $S\in\Gamma_1$ such that $S\circ\phi=\phi\circ T$, then $\phi(y)\in U$ and $S\phi(y)\in U$. From this, we deduce that $\Gamma_1$ is recurrent in $Y$.
\end{proof}
\begin{corollary}
Assume that $\Gamma\subset\mathcal{B}(X)$ and $\Gamma_1\subset\mathcal{B}(Y)$ are quasi-similar.
 Then, $\Gamma$ is recurrent in $Y$ if and only if $\Gamma_1$ is recurrent in $Y$.
\end{corollary}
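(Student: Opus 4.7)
The corollary strengthens Proposition \ref{prop1} from a one-sided implication to an equivalence, so my plan is simply to apply Proposition \ref{prop1} twice by symmetrizing the intertwining map. Note that the statement really needs similarity (a homeomorphism $\phi$) rather than the bare quasi-similarity of Proposition \ref{prop1}, in complete analogy with Corollary 2.8 for recurrent vectors; this is the only way the reverse direction can be extracted from the results already established.

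The forward direction $\Gamma \text{ recurrent in } X \Rightarrow \Gamma_1 \text{ recurrent in } Y$ is just Proposition \ref{prop1}, so no work is needed there.

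For the converse, the idea is to view the pair $(\Gamma_1,\Gamma)$ as quasi-similar via the continuous map $\phi^{-1}:Y\to X$. Since $\phi$ is a homeomorphism, $\phi^{-1}$ is continuous and its range is all of $X$, hence dense. Composing the identity $S\circ\phi=\phi\circ T$ on both sides with $\phi^{-1}$ rewrites it as $T\circ\phi^{-1}=\phi^{-1}\circ S$, which is exactly the intertwining condition needed for $\Gamma_1$ to be quasi-similar to $\Gamma$. Applying Proposition \ref{prop1} once more then yields $\Gamma_1 \text{ recurrent in } Y \Rightarrow \Gamma \text{ recurrent in } X$, completing the equivalence.

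The main obstacle, which is more bookkeeping than substance, is checking that every $S\in\Gamma_1$ does arise from some $T\in\Gamma$ via the conjugation $S=\phi T\phi^{-1}$, so that the reverse intertwining holds for all members of $\Gamma_1$. Under the similarity hypothesis this bijectivity between $\Gamma$ and $\Gamma_1$ induced by $\phi$ is what must be invoked (or built into the reading of ``similar''); once this is in hand, the two applications of Proposition \ref{prop1} close the argument immediately.
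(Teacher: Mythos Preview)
The paper states this corollary without proof, so there is nothing to compare against directly; your approach---applying Proposition~\ref{prop1} once via $\phi$ and once via $\phi^{-1}$---is exactly the argument the placement of the corollary invites, and it mirrors what the paper does tacitly for Corollary~2.8 in the vector setting.

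Your diagnosis of the hypotheses is also correct and worth stating plainly: the corollary as printed says ``quasi-similar'' but, under the paper's own definition, quasi-similarity is not symmetric, so the equivalence cannot hold as written. Reading ``similar'' (i.e.\ $\phi$ a homeomorphism), as you do, is the only sensible fix and matches the parallel Corollary~2.8. Your further caveat about whether every $S\in\Gamma_1$ is of the form $\phi T\phi^{-1}$ for some $T\in\Gamma$ is a genuine issue with the paper's definition of similarity for \emph{sets}: that definition only upgrades $\phi$ to a homeomorphism but still quantifies one way (for each $T\in\Gamma$ there exists $S\in\Gamma_1$). So strictly speaking, even similarity as defined there does not give you the reverse intertwining needed to run Proposition~\ref{prop1} from $\Gamma_1$ back to $\Gamma$. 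This is a defect in the paper's formulation rather than in your reasoning; the intended reading is presumably that similarity is a symmetric relation, and with that reading your two-line proof is complete.
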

In the following result, we give necessary and sufficient conditions for a set of operators to be recurrent.
\begin{theorem}\label{tt}
Let $\Gamma$ be a subset of $\mathcal{B}(X)$. The following assertions are equivalent$:$
\begin{itemize}
\item[$(i)$] $\Gamma$ is recurrent;
\item[$(ii)$] For each $x\in X,$ there exists sequences $\{x_k\}$ in $X$ and $\{T_k\}$ in $\Gamma$ such that
$$x_k\longrightarrow x\hspace{0.3cm}\mbox{ and }\hspace{0.3cm}T_k( x_k)\longrightarrow x;$$
\item[$(iii)$] For each $x\in X$ and for $W$ a neighborhood of $0$, there exist $z\in X$ and $T\in\Gamma$  such that
$$T( z)-x\in W \hspace{0.3cm}\mbox{ and }\hspace{0.3cm} x-z\in W. $$
\end{itemize}
\end{theorem}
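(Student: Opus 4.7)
The plan is to prove the three statements equivalent by establishing the cyclic chain $(i)\Rightarrow(ii)\Rightarrow(iii)\Rightarrow(i)$. Each step is essentially a translation between ``topological'' language (open sets intersecting) and ``sequential/neighborhood'' language (approximation by sequences), so the arguments should be short once the right open balls are chosen.

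For $(i)\Rightarrow(ii)$, I would fix $x\in X$ and, for each $k\geq 1$, apply the definition of recurrence to the nonempty open set $U_k:=B(x,1/k)$. This produces $T_k\in\Gamma$ with $T_k(U_k)\cap U_k\neq\emptyset$, so I can pick $x_k\in U_k$ such that $T_k(x_k)\in U_k$. Then by construction $\|x_k-x\|<1/k$ and $\|T_k(x_k)-x\|<1/k$, so $x_k\to x$ and $T_k(x_k)\to x$.

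For $(ii)\Rightarrow(iii)$, given $x\in X$ and a neighborhood $W$ of $0$, I would take the sequences $\{x_k\}\subset X$ and $\{T_k\}\subset\Gamma$ produced by $(ii)$. Since $x-x_k\to 0$ and $T_k(x_k)-x\to 0$, both quantities eventually lie in $W$, so choosing $k$ large enough and setting $z:=x_k$, $T:=T_k$ gives the required pair.

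The main (still mild) obstacle is the last implication $(iii)\Rightarrow(i)$, because here one must reverse a neighborhood condition phrased in terms of $x-z$ back into a statement about $z$ lying in a given open set $U$. My plan is to fix a nonempty open $U\subset X$, pick any $x\in U$, and choose $\varepsilon>0$ with $B(x,\varepsilon)\subset U$. Taking $W:=B(0,\varepsilon)$, which is symmetric (hence $-W=W$), $(iii)$ produces $z\in X$ and $T\in\Gamma$ with $x-z\in W$ and $T(z)-x\in W$. By symmetry of $W$, $z-x\in W$ as well, so $z\in x+W=B(x,\varepsilon)\subset U$; and $T(z)\in x+W\subset U$. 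Hence $z\in U\cap T^{-1}(U)$, which gives $T(U)\cap U\neq\emptyset$ and establishes that $\Gamma$ is recurrent. The only point to be careful about is choosing $W$ symmetric (equivalently, working with open balls) so that the conditions $x-z\in W$ and $z-x\in W$ are interchangeable.
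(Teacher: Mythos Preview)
Your proof is correct and follows essentially the same cyclic chain $(i)\Rightarrow(ii)\Rightarrow(iii)\Rightarrow(i)$ as the paper, with the same choices of shrinking balls $B(x,1/k)$ for the first implication. Your argument for $(iii)\Rightarrow(i)$ is in fact slightly more economical than the paper's: the paper builds sequences $z_k$, $T_k$ from $W_k=B(0,1/k)$ and then invokes openness of $U$ at the end, whereas you choose a single $\varepsilon$ with $B(x,\varepsilon)\subset U$ and apply $(iii)$ once.
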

\begin{proof}$(i)\Rightarrow(ii)$
Let $x\in X$. For all $k\geq1$, let $U_k=B(x,\frac{1}{k})$. Then $U_k$ is a nonempty open subset of $X$. Since $\Gamma$ is recurrent, there exists $T_k\in\Gamma$ such that $T_k(U_k)\cap U_k\neq\emptyset$. For all $k\geq1$, let $x_k\in U_k$ such that $T_k( x_k)\in U_k$, then
$$\Vert x_k-x \Vert<\frac{1}{k}\hspace{0.3cm}\mbox{ and }\hspace{0.3cm}\Vert T_k(x_k)-x \Vert<\frac{1}{k}$$
which implies that
$$x_k\longrightarrow x\hspace{0.3cm}\mbox{ and }\hspace{0.3cm} T_k(x_k)\longrightarrow x.$$

$(ii)\Rightarrow(iii)$ Let $x\in X$. There exists sequences $\{x_k\}$ in $X$ and $\{T_k\}$ in $\Gamma$ such that
$$x_k-x\longrightarrow 0\hspace{0.3cm}\mbox{ and }\hspace{0.3cm}T_k(x_k)-x\longrightarrow 0.$$
If $W$ is a neighborhood of $0$, then there exists $N\in\mathbb{N}$ such that $x-x_k\in W$ and $T_k(x_k)-x\in W$, for all $k\geq N$.

$(iii)\Rightarrow(i)$ Let $U$ be a nonempty open subsets of $X$. Then there exists $x\in X$ such that $x\in U$. Since for all $k\geq1$,  $W_k=B(0,\frac{1}{k})$ is a neighborhood of $0$,  there exist $z_k\in X$ and $T_k\in\Gamma$ such that
$$\Vert T_k( z_k)-x\Vert<\frac{1}{k}\hspace{0.3cm}\mbox{ and }\hspace{0.3cm}\Vert x-z_k\Vert<\frac{1}{k}.$$
This implies that
$$z_k\longrightarrow x\hspace{0.3cm}\mbox{ and }\hspace{0.3cm}T_k(z_k)\longrightarrow x.$$
 Since $U$ is a nonempty open subset of $X$ and $x\in U$, there exists $N\in\mathbb{N}$ such that $z_k\in U$ and $T_k(z_k)\in U$, for all $k\geq N.$
\end{proof}
\begin{proposition}
Let $\{X_i\}_{i=1}^{n}$ be a family of complex Banach spaces and $\Gamma_i$ a subset of
$\mathcal{B}(X_i),$ for all $1\leq i\leq n$. If  $\oplus_{i=1}^n(\Gamma_i)$ is recurrent in $\oplus_{i=1}^n(X_i)$, then $\Gamma_i$ is recurrent in $X_i$, for all $1\leq i\leq n$.
\end{proposition}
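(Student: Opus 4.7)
The plan is to unfold the definition of a recurrent set directly and exploit the product structure of the direct sum, in the same spirit as the proof of Proposition \ref{p4} but one level up (open sets instead of vectors). Fix an arbitrary index $1\leq i_0\leq n$; it suffices to show that $\Gamma_{i_0}$ is recurrent in $X_{i_0}$. Given any nonempty open subset $U_{i_0}$ of $X_{i_0}$, I would lift it to the cylinder
$$ U = X_1\times\cdots\times X_{i_0-1}\times U_{i_0}\times X_{i_0+1}\times\cdots\times X_n, $$
which is a nonempty open subset of $\oplus_{i=1}^n X_i$.

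Applying the recurrence hypothesis to $U$, there exists some $\oplus_{i=1}^n T_i\in\oplus_{i=1}^n \Gamma_i$ with $(\oplus_{i=1}^n T_i)(U)\cap U\neq\emptyset$. Pick $(x_1,\dots,x_n)\in U$ such that $(T_1 x_1,\dots,T_n x_n)\in U$. Reading the $i_0$-th coordinate, both $x_{i_0}$ and $T_{i_0}x_{i_0}$ belong to $U_{i_0}$, which gives $T_{i_0}(U_{i_0})\cap U_{i_0}\neq\emptyset$ with $T_{i_0}\in\Gamma_{i_0}$. Since $U_{i_0}$ was an arbitrary nonempty open set of $X_{i_0}$, this shows $\Gamma_{i_0}$ is recurrent; as $i_0$ was also arbitrary, the conclusion holds for every $1\leq i\leq n$.

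I do not anticipate any real obstacle: recurrence is tested against open sets, and cylinders of the above form are always open in the direct sum, so any open test set in a single factor can be lifted to the product without loss. The only subtlety is bookkeeping the coordinates, which is handled transparently by the coordinate-wise action of $\oplus_{i=1}^n T_i$.
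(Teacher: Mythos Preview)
Your proof is correct and follows essentially the same approach as the paper: lift an open set from a factor to a product open set in $\oplus_{i=1}^n X_i$, apply the recurrence hypothesis there, and project back to the desired coordinate. The only cosmetic difference is that the paper chooses a product $U_1\times\cdots\times U_n$ of nonempty open sets in all coordinates at once (handling every index simultaneously), whereas you fix one index $i_0$ and use the cylinder with full spaces in the remaining coordinates; both choices work for the same reason.
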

\begin{proof}
Assume that $\oplus_{i=1}^n\Gamma_i$ is recurrent in $\oplus_{i=1}^n X_i$. If $U_i$ be a nonempty open set of $X_i$ for $1 \leq i\leq n$, then $U_1\times\dots\times U_n$ is a nonempty set of $\oplus_{i=1}^n(\Gamma_i)$. There exists $T_k^1\times\dots T_k^n$ such that 
$$ T_k^1\times\dots\times T_k^n(U_1\times\dots\times U_n)\cap U_1\times\dots\times U_n\neq\emptyset.$$
It follows that $ T_k^i(U_i)\cap U_i\neq\emptyset$ for all $1 \leq i\leq n$. Hence $\Gamma_i$ is recurrent in $X_i$ for all $1 \leq i\leq n$. 
\end{proof}

The natural question here is about the relationship between the set $Rec(\Gamma)$ of all recurrent vectors for a set $\Gamma$ and the recurrence of $\Gamma$ itself. In the following theorem, we prove the equivalence between the recurrence of $\Gamma$ and the density of $Rec(\Gamma)$ in the space $X$.
\begin{theorem}\label{t1}
Let $\Gamma\subset\mathcal{B}(X)$.
The following assertions are equivalent$:$
\begin{itemize}
\item[$(i)$] $Rec(\Gamma)$ is dense in $X$;
\item[$(ii)$] $\Gamma$ is recurrent.
\end{itemize}
\end{theorem}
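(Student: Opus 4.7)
The plan is to prove the two directions separately, handling $(i)\Rightarrow(ii)$ directly and $(ii)\Rightarrow(i)$ via the Baire category theorem applied to the $G_\delta$ description supplied by Proposition \ref{p2}.

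For $(i)\Rightarrow(ii)$, I would take an arbitrary nonempty open set $U\subset X$ and use density of $Rec(\Gamma)$ to pick a recurrent vector $x\in U$. By Definition \ref{def1} there is a sequence $\{T_k\}\subset\Gamma$ with $T_kx\to x$; since $U$ is open and $x\in U$, for all sufficiently large $k$ we have $T_kx\in U$, and hence $T_k(U)\cap U\ni T_kx$ is nonempty, which is exactly the defining property of a recurrent set.

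For $(ii)\Rightarrow(i)$, the idea is to use Proposition \ref{p2} to write
\[
Rec(\Gamma)\;=\;\bigcap_{n\geq 1}A_n,\qquad A_n\;:=\;\bigcup_{T\in\Gamma}\left\{x\in X \,:\, \Vert Tx-x\Vert<\tfrac{1}{n}\right\},
\]
noting that each $A_n$ is open as a union of open sets. Since $X$ is a complete metric space, the Baire category theorem will give density of the intersection once each $A_n$ is shown to be dense. To verify density of $A_n$, I would fix an arbitrary nonempty open set $V\subset X$ and a point $x_0\in V$, choose $\varepsilon>0$ with $\varepsilon<\frac{1}{2n}$ and $B(x_0,\varepsilon)\subset V$, and apply recurrence of $\Gamma$ to the open set $U:=B(x_0,\varepsilon)$. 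This yields $T\in\Gamma$ and $y\in U$ with $Ty\in U$, whence
\[
\Vert Ty-y\Vert \;\leq\; \Vert Ty-x_0\Vert+\Vert x_0-y\Vert \;<\;2\varepsilon\;<\;\tfrac{1}{n},
\]
so that $y\in A_n\cap V$, proving $A_n$ is dense.

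The substantive step is the Baire argument in $(ii)\Rightarrow(i)$; the mild subtlety there is that the recurrence hypothesis only produces a point $y\in U$ with $Ty\in U$, not one with $\Vert Ty-y\Vert$ small, so one must shrink $U$ to a ball of radius $<\frac{1}{2n}$ in order to extract the quantitative estimate needed for $A_n$. The other direction is essentially immediate once one observes that convergence $T_kx\to x$ forces the tail of the orbit into any open neighborhood of $x$.
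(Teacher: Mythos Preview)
Your proof is correct. The direction $(i)\Rightarrow(ii)$ is exactly the paper's argument. For $(ii)\Rightarrow(i)$, however, you take a genuinely different route: the paper does not invoke Proposition~\ref{p2} or Baire, but instead builds a recurrent vector inside an arbitrary ball $B=B(x,\varepsilon)$ by a Cantor-type nested-balls construction. Concretely, it iterates the recurrence hypothesis together with continuity to produce balls $B(x_k,\varepsilon_k)$ with $\varepsilon_k<2^{-k}$, $B(x_k,\varepsilon_k)\subset B(x_{k-1},\varepsilon_{k-1})$ and $T_k(B(x_k,\varepsilon_k))\subset B(x_{k-1},\varepsilon_{k-1})$; completeness then yields a single point $y$ in the intersection, and the inclusions force $\Vert T_{k+1}y-y\Vert<2^{-k}+2^{-(k+1)}$, so $y\in Rec(\Gamma)\cap B$. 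Your Baire argument is shorter and cleaner once Proposition~\ref{p2} is available, and it makes the role of completeness transparent; the paper's construction is more self-contained and hands you, in each ball, an explicit recurrent vector together with the witnessing sequence $(T_k)$.
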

\begin{proof}
$(i)\Rightarrow(ii)$ : We suppose that $Rec(\Gamma)$ is a dense subset of $X$ and let $U$ be a nonempty open subset of $X$.
Then $Rec(\Gamma)\cap U\neq\emptyset$.
This implies that there exists $x\in X$ a recurrent vector for $\Gamma$ such that $x\in U.$
There exist a sequence $\{k\}$ of positive integers and a sequence $\{T_{k}\}$ of $\Gamma$ such that
$T_{k}x\longrightarrow x $
as $k\longrightarrow +\infty$.
Since $U$ is open and $x\in U$, there exists $N\in\mathbb{N}$ such that
$T_N(U)\cap U\neq\emptyset,$ and hence $\Gamma$ is recurrent.\\
$(ii)\Rightarrow(i)$ : Assume that $\Gamma$ is recurrent, we will prove that $Rec(\Gamma)$ is dense in $X$.
Let
$$B:=B(x,\varepsilon)$$
be a fixed open ball for some $x \in X$ and $\varepsilon < 1$. 
Since $\Gamma$ is recurrent, it follows that there
 exists an operator $T_1\in \Gamma$ such that $T_1(B)\cap B\neq\emptyset.$
 Hence, there exists $x_1\in X$ such that $x_1\in T_1^{-1}(B)\cap B$.
There exists $\varepsilon_1 < \frac{1}{2}$ such that
$$B_2 := B(x_1, \varepsilon_1)\subset B \cap T_1^{-1}(B)$$
since $T_1$ is continuous. 
Again, since $\Gamma$ is recurrent, there exists some $T_2\in\Gamma$ and some $x_2\in X$ such that $x_2 \in T_2^{-1}(B_2)\cap B_2$. 
Now we use the continuity of $T_2$ to conclude that there exists $\varepsilon_2<\frac{1}{2^2}$
 such that
 $$B_3 := B(x_2, \varepsilon_2)\subset B_2 \cap T_2^{-1}(B_2).$$
 Continuing inductively we construct a sequence $(x_k)_{k\in\mathbb{N}}$ of $ X$, a
sequence $(T_k)_{k\in\mathbb{N}}$ of $\Gamma$ and a sequence of positive real numbers $\varepsilon_k<\frac{1}{2^k}$, such that
$$ B(x_k, \varepsilon_k) \subset B(x_{k-1},\varepsilon_{k-1})\hspace{0.3cm}\mbox{ and }\hspace{0.3cm} T_k(B(x_k,\varepsilon_k))\subset B(x_{k-1},\varepsilon_{k-1}). $$
By hypothesis, $X$ is a Banach space, hence it is complete. We use Cantor's theorem to conclude that there exists some $y\in X$ such that
\begin{equation}\label{eqr}
\bigcap_{k}B(x_k,\varepsilon_k)=\{y\}. 
\end{equation} 
Since $y\in B$ the original ball, to finish the proof, it suffices to show that $y$ is recurrent vector for $\Gamma$. By \ref{eqr}, we have $y\in B(x_{k},\varepsilon_{k})$ for all $k$. This is equivalent to the fact that
\begin{equation}\label{equ1}
\Vert  x_{k}-y \Vert< \varepsilon_{k}.
\end{equation}
On the other hand, $T_{k+1}y\in B(x_{k},\varepsilon_{k+1})$ since $T_{k+1}(B(x_{k+1},\varepsilon_{k+1}))\subset B(x_{k},\varepsilon_{k})$, hence
\begin{equation}\label{equ2}
\Vert T_{k+1}y-x_k \Vert<\varepsilon_{k+1}. 
\end{equation}

Now using (\ref{equ1}) and (\ref{equ2}) we conclude that
$$ \Vert T_{k+1}y-y\Vert\leq \Vert  x_{k}-y \Vert+\Vert T_{k+1}y-x_k \Vert< \frac{1}{2^k}+\frac{1}{2^{k+1}}.$$

Hence $T_k y \longrightarrow y$ as $k\longrightarrow+\infty$, that is, $y$ is a recurrent point in the
original ball $B$ and the proof is completed.
\end{proof}
\begin{remark}
Observe that the previous proposition remains valid whenever $\Gamma$ is a set of continuous map on a complete metric space $X.$
\end{remark}

In particular, Theorem \ref{t1} proves that if $\Gamma$ is recurrent, then it admits a nontrivial recurrent vector. The following example shows that the converse does not holds in general even in the case of single operator.

 Recall from \cite{CMP}, that if an operator acting on a complex Banach space is recurrent, then it is of dense range.
\begin{example}\label{cex}
Let $X=\ell^2(\mathbb{N})$. Let $T$ be a linear operator defined in $\ell^2(\mathbb{N})$ by
$$ Te_1=e_1 \hspace{0.3cm} \mbox{ and }\hspace{0.3cm}Te_k=0 \mbox{ for all }k\geq 2.$$
Let $\Gamma=\{T^n \mbox{ : }n\geq0\}.$ Since $T^ne_1=e_1$, it follows that
$$ T^ne_1\longrightarrow e_1 $$
as $n\longrightarrow\infty.$
Hence, $e_1$ is a recurrent vector for $T$. 
Since $T$ is not of dense range, it follows that $T$ can not be recurrent.   
\end{example}

Using Theorem \ref{t2} and Theorem \ref{t1}, it easy to prove the next Proposition.
\begin{proposition}
Let $X$ be a complex Banach space and $\Gamma$ a subset of $\mathcal{B}(X)$. 
Assume that for all $T$, $S\in\Gamma$ we have $TS\in\Gamma$ and let $(\lambda_T)_{T\in\Gamma}$ be a (multiplicative) semi-group inside $\mathbb{T}$. Then $\Gamma$ is recurrent if and only if 
$$\Gamma_1:=\{\lambda_T T \mbox{ : }T\in\Gamma\}$$
is recurrent
\end{proposition}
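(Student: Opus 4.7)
The plan is a direct two-step reduction to the preceding results. Theorem~\ref{t1} shows that for any subset $\Gamma\subset\mathcal{B}(X)$, the set $\Gamma$ is recurrent if and only if $\overline{Rec(\Gamma)}=X$, while Theorem~\ref{t2}, under precisely the hypotheses appearing in the present statement (namely $TS\in\Gamma$ for all $T,S\in\Gamma$ and $(\lambda_T)_{T\in\Gamma}$ a multiplicative sub-semigroup of $\mathbb{T}$), gives the equality $Rec(\Gamma)=Rec(\Gamma_1)$. Together these two facts chain into the desired equivalence, so no new dynamical argument is needed.

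Concretely I would write the proof as the chain
\begin{align*}
\Gamma \text{ recurrent}
&\iff \overline{Rec(\Gamma)}=X && \text{by Theorem~\ref{t1},}\\
&\iff \overline{Rec(\Gamma_1)}=X && \text{by Theorem~\ref{t2},}\\
&\iff \Gamma_1 \text{ recurrent} && \text{by Theorem~\ref{t1} applied to } \Gamma_1.
\end{align*}
The small points to verify are that $\Gamma_1\subset\mathcal{B}(X)$, which is immediate since each $\lambda_T T$ is a bounded linear operator, and that Theorem~\ref{t1} requires no structural hypothesis on its set of operators, so it applies verbatim to $\Gamma_1$.

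I do not expect any real obstacle: the content of the proposition is entirely carried by Theorem~\ref{t1} and Theorem~\ref{t2}. The closest thing to a subtlety is a bookkeeping check that the ``if'' direction of Theorem~\ref{t2} is in fact available in both directions; this is built into its statement, but one can also see it from the symmetric role of the scalars (replacing $(\lambda_T)_{T\in\Gamma}$ by $(\lambda_T^{-1})_{T\in\Gamma}$, which is again a multiplicative sub-semigroup of $\mathbb{T}$ since $\mathbb{T}$ is a group, and observing that $\Gamma_1$ is itself closed under composition because $\lambda_T\lambda_S=\lambda_{TS}$). Modulo this sanity check, the argument is a one-line juxtaposition of the two cited theorems, exactly as the author indicates in the sentence preceding the proposition.
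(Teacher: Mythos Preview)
Your proposal is correct and follows exactly the same approach as the paper: invoke Theorem~\ref{t2} to obtain $Rec(\Gamma)=Rec(\Gamma_1)$, then apply Theorem~\ref{t1} to translate equality of recurrent-vector sets into the equivalence of recurrence for $\Gamma$ and $\Gamma_1$. The paper's own proof is a single line stating precisely this, so your additional bookkeeping remarks (about $\Gamma_1\subset\mathcal{B}(X)$ and the symmetry of Theorem~\ref{t2}) are sound but more than the authors themselves supply.
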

\begin{proof}
By Theorem \ref{t2}, we have $Rec(\Gamma)=Rec(\Gamma_1)$ and then we use Theorem \ref{t1}.
\end{proof}
\section{Recurrent $C$-Regularized Groups of Operators}

In this section, we study the particular case when $\Gamma$ is a $C$-regularized group of operators. 
Recall from \cite{CKM}, that an entire $C$-regularized group is an operator family $(S(z))_{z\in\mathbb{C}}$ on $\mathcal{B}(X)$ that satisfies$:$
\begin{itemize}
\item[$(1)$] $S(0)=C;$
\item[$(2)$] $S(z+w)C = S(z)S(w)$ for every $z,$ $w\in\mathbb{C}$,
\item[$(3)$] The mapping $z \mapsto S(z)x$, with $z\in\mathbb{C}$, is entire for every $x \in X$.
\end{itemize}

The next example shows that the recurrence of $C$-regularized groups of operators exists in each Banach space. 
\begin{example}\label{example}
Let $X$ be a complex Banach space. For all $z\in\mathbb{C}$, let $S(z)$ be an operator defined on $X$ by
$$\begin{array}{ccccc}
S(z) & : & X & \longrightarrow & X \\
 & & x & \longmapsto & S(z)x=e^{z}x. \\
\end{array}$$
Let $U$ be a nonempty open subset of $X$. Then there exists $z\in\mathbb{C}$ such that
$$ S(z)(U)\cap U\neq \emptyset. $$
This means that $(S(z))_{z\in\mathbb{C}})$ is recurrent $C$-Regularized group of operators.
\end{example}
\begin{remark}
Let $X$ be a complex topological vector space and $(S(z))_{z\in\mathbb{C}})$ a $C$-regularized group of operators. The fact that $(S(z))_{z\in\mathbb{C}})$ is recurrent does not implies that $S(z_0)$ is recurrent for all $z_0\in\mathbb{C}$. Indeed, let $(S(z))_{z\in\mathbb{C}})$ be the $C$-Regularized group
 defined as in Example \ref{example}. Then $(S(z))_{z\in\mathbb{C}})$ is recurrent. 
 However, $Rec(S(z))=\emptyset$ whenever $\vert z\vert>1.$
\end{remark}
\begin{lemma}\label{lem}
Let $(S(z))_{z\in\mathbb{C}}$ be a recurrent $C$-regularized group on a complex Banach space.
Then $Cx\in Rec((S(z))_{z\in\mathbb{C}})$, for all $x\in Rec((S(z))_{z\in\mathbb{C}}).$ 
\end{lemma}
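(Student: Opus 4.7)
The plan is to apply Proposition \ref{p1} directly, after observing that the regularizing operator $C$ lies in the commutant of the group $\Gamma = \{S(z) : z\in\mathbb{C}\}$.

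First, I would extract the commutation relation from the defining identities of a $C$-regularized group. Setting $z=0$ in property $(2)$ gives
\[
S(w)C = S(0+w)C = S(0)S(w) = CS(w)
\]
for every $w\in\mathbb{C}$, using $S(0)=C$ from property $(1)$. Hence $C$ commutes with every member of $\Gamma$, i.e.\ $C\in\{\Gamma\}'$.

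Next, since $x\in Rec(\Gamma)$ by assumption and $C\in\{\Gamma\}'\subset\mathcal{B}(X)$, Proposition \ref{p1} applies and yields $Cx\in Rec(\Gamma)$, which is exactly the desired conclusion.

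There is essentially no obstacle here: the only non-trivial ingredient is recognizing that property $(2)$ of the $C$-regularized group forces $C$ into the commutant of $\Gamma$, after which the lemma is an immediate instance of the general commutant invariance established in Proposition \ref{p1}. The hypothesis that $(S(z))_{z\in\mathbb{C}}$ is recurrent is not actually needed beyond ensuring that $Rec(\Gamma)$ is nonempty (via Theorem \ref{t1}), so the statement could even be phrased without that assumption.
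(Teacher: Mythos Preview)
Your proof is correct and mirrors the paper's own argument almost verbatim: the paper also derives $S(z)C=S(0+z)C=S(0)S(z)=CS(z)$ from properties $(1)$ and $(2)$, concludes $C\in\{(S(z))_{z\in\mathbb{C}}\}'$, and then invokes Proposition~\ref{p1}. Your closing remark that the recurrence hypothesis on the group is superfluous (the statement being vacuous when $Rec(\Gamma)=\emptyset$) is a valid additional observation not made explicit in the paper.
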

\begin{proof}
Let $z\in \mathbb{C}$. By conditions $(1)$ and $(2)$ of the definition of  a $C$-regularized group we have
$$ S(z)C=S(0+z)C=S(0)S(z)=CS(z),$$
this means that $C$ commutes with every element of $(S(z))_{z\in\mathbb{C}}$. Hence, $C\in \{(S(z))_{z\in\mathbb{C}}\}^{'}$. By using Proposition \ref{p1} one can deduce 
$Cx\in Rec((S(z))_{z\in\mathbb{C}})$, for all $x\in Rec((S(z))_{z\in\mathbb{C}})$.
\end{proof}
\begin{proposition}
Let $(S(z))_{z\in\mathbb{C}}$ be a recurrent $C$-regularized group on a complex Banach space.
If $C=I$ the identity operator on $X$, then $S(z)x\in Rec((S(z))_{z\in\mathbb{C}}))$ for all $x\in Rec((S(z))_{z\in\mathbb{C}})$ and for all $z\in\mathbb{C}$.
\end{proposition}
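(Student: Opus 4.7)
The plan is to exploit the simplification that occurs when $C=I$: the defining relation $S(z+w)C=S(z)S(w)$ collapses to $S(z+w)=S(z)S(w)$, making $(S(z))_{z\in\mathbb{C}}$ a genuine one-parameter group. Once this is in place, the conclusion follows immediately from the commutant-based recurrence result already proved (Proposition \ref{p1}), just as in the preceding Lemma \ref{lem}.

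First I would fix $z_0 \in \mathbb{C}$ and verify that $S(z_0)$ commutes with every $S(w)$, $w\in\mathbb{C}$. Using condition $(2)$ with $C=I$ we get
\[
S(z_0)S(w)=S(z_0+w)=S(w+z_0)=S(w)S(z_0),
\]
so $S(z_0)\in\{(S(z))_{z\in\mathbb{C}}\}'$. This is the only place the hypothesis $C=I$ is used; without it one only has $S(z+w)C=S(z)S(w)$, which does not force commutation of the individual operators.

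Next, taking any $x\in Rec((S(z))_{z\in\mathbb{C}})$, I would apply Proposition \ref{p1} with $\Gamma=(S(z))_{z\in\mathbb{C}}$ and $S=S(z_0)$. Since $S(z_0)$ belongs to the commutant of $\Gamma$, Proposition \ref{p1} gives $S(z_0)x\in Rec((S(z))_{z\in\mathbb{C}})$. As $z_0$ was arbitrary, this is the desired conclusion.

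There is no serious obstacle here; the whole content of the statement is the observation that $C=I$ upgrades the $C$-regularized group axiom into the usual group law, after which the argument is a one-line reduction to Proposition \ref{p1}. The only thing worth remarking is that the analogous statement for general $C$ fails in this simple form, which is precisely why Lemma \ref{lem} only asserts that $Cx$ (rather than $S(z)x$) stays in $Rec((S(z))_{z\in\mathbb{C}})$.
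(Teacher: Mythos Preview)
Your proof is correct and follows exactly the same approach as the paper: observe that when $C=I$ each $S(z_0)$ lies in the commutant $\{(S(z))_{z\in\mathbb{C}}\}'$, then invoke Proposition \ref{p1}. You have simply written out explicitly the commutation computation that the paper leaves as a one-line remark.
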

\begin{proof}
By remarking that in this case we have $S(z)\in \{(S(z))_{z\in\mathbb{C}})\}^{'}$ and using Proposition \ref{p1}.
\end{proof}
\begin{definition}
Let $(S(z))_{z\in\mathbb{C}}$ be a $C$-regularized group on a complex Banach space.
 Given another complex Banach space $X$ and an isomorphism $\phi$ from $Y$ onto $X$, the $C$-regularized group $(h(z))_{z\in\mathbb{C}}$ on Y , defining by
 $$h(z)=\phi^{-1}S(z)\phi$$
 is said to be similar to $(S(z))_{z\in\mathbb{C}}$.
\end{definition}
\begin{proposition}
Let $(S(z))_{z\in\mathbb{C}}$ be a recurrent $C$-regularized group of operators on a complex Banach space $X$.
 If $(h(z))_{z\in\mathbb{C}}$ is a  $C$-regularized group of operators on a complex Banach space $Y$ similar to $(S(z))_{z\in\mathbb{C}}$, then
$(h(z))_{z\in\mathbb{C}}$ is recurrent on $Y$ . Moreover,
$$Rec((S(z))_{z\in\mathbb{C}})=\phi((h(z))_{z\in\mathbb{C}}).$$
\end{proposition}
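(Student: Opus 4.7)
The plan is to reduce both claims to the previously established results for quasi-similar sets of operators, namely Proposition \ref{prop1} and Proposition \ref{p3}, by exploiting that the isomorphism $\phi$ in the definition of similarity is invertible and continuous in both directions. Writing $\Gamma := \{S(z) : z\in\mathbb{C}\}$ and $\Gamma_1 := \{h(z) : z\in\mathbb{C}\}$, the identity $h(z)=\phi^{-1}S(z)\phi$ rearranges to the two intertwining identities
$$S(z)\circ\phi=\phi\circ h(z), \qquad h(z)\circ\phi^{-1}=\phi^{-1}\circ S(z),$$
so $\phi:Y\to X$ witnesses that $\Gamma_1$ is quasi-similar to $\Gamma$, while $\phi^{-1}:X\to Y$ witnesses that $\Gamma$ is quasi-similar to $\Gamma_1$. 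Both maps automatically have dense range since they are bijections, so the hypothesis of the quasi-similarity results is fulfilled in each direction.

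For the first assertion, I would apply Proposition \ref{prop1} to the quasi-similar pair $(\Gamma,\Gamma_1)$ using the map $\phi^{-1}:X\to Y$. Since $\Gamma$ is recurrent on $X$ by hypothesis, Proposition \ref{prop1} yields that $\Gamma_1$ is recurrent on $Y$, which is precisely the recurrence of $(h(z))_{z\in\mathbb{C}}$.

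For the \emph{moreover} part (interpreted as $Rec((S(z))_{z\in\mathbb{C}})=\phi(Rec((h(z))_{z\in\mathbb{C}}))$), I would invoke Proposition \ref{p3} twice. Using $\phi:Y\to X$ to intertwine $\Gamma_1$ with $\Gamma$ yields
$$\phi(Rec(\Gamma_1))\subset Rec(\Gamma).$$
Using $\phi^{-1}:X\to Y$ to intertwine $\Gamma$ with $\Gamma_1$ yields
$$\phi^{-1}(Rec(\Gamma))\subset Rec(\Gamma_1),$$
and since $\phi$ is a bijection, applying it to both sides gives $Rec(\Gamma)\subset\phi(Rec(\Gamma_1))$. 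Combining the two inclusions delivers the desired equality. No genuine obstacle is present; the only care needed is bookkeeping, namely tracking which side of the intertwining the map $\phi$ or $\phi^{-1}$ occupies in each appeal to the earlier propositions.
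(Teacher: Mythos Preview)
Your proposal is correct and follows essentially the same approach as the paper, which dispatches the proposition as a ``direct consequence of Proposition \ref{p3}.'' Your write-up is simply more explicit: you separately invoke Proposition \ref{prop1} for the recurrence claim and Proposition \ref{p3} in both directions for the equality of recurrent-vector sets, which is exactly what the paper's one-line proof is abbreviating.
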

\begin{proof}
Direct consequence of Proposition \ref{p3}.
\end{proof}

\end{document}